\theoremstyle{remark}{

\newtheorem{Ex}{{\rm Example}}

\newtheorem{Prob}{{\rm Problem}}

}
\theoremstyle{plain}
{

\newtheorem{Thm}{Theorem}

}
\begin{document}
\title[Morse-Bott functions on 3-dimensional manifolds of certain classes]{Characterizing $3$-dimensional manifolds represented as connected sums of Lens spaces, $S^2 \times S^1$, and torus bundles over the circle by certain Morse-Bott functions}
\author{Naoki kitazawa}
\keywords{Morse(-Bott) functions. Reeb digraphs. $3$-dimensional closed manifolds. Lens spaces. Torus bundles. \\
\indent {\it \textup{2020} Mathematics Subject Classification}: Primary~57R45. Secondary~57R19.}

\address{Institute of Mathematics for Industry, Kyushu University, 744 Motooka, Nishi-ku Fukuoka 819-0395, Japan\\
 TEL (Office): +81-92-802-4402 \\
 FAX (Office): +81-92-802-4405 \\
}
\email{n-kitazawa@imi.kyushu-u.ac.jp, naokikitazawa.formath@gmail.com}
\urladdr{https://naokikitazawa.github.io/NaokiKitazawa.html}
\maketitle
\begin{abstract}
We characterize $3$-dimensional manifolds represented as connected sums of Lens spaces, copies of $S^2 \times S^1$, and torus bundles over the circle by certain Morse-Bott functions.
This adds to our previous result around 2024, classifying Morse functions whose preimages containing no singular points are disjoint unions of spheres and tori on closed manifolds represented as connected sums of Lens spaces and copies of $S^2 \times S^1$: we have strengthened and explicitized Saeki's result, characterizing the manifolds via such functions, in 2006. We apply similar arguments. However we discuss in a self-contained way essentially.

\end{abstract}
\section{Introduction.}
\label{sec:1}

{\it Morse}({\it -Bott}) functions have been fundamental and important tools in algebraic topology, differential topology and various geometry of manifolds since the birth of the theory, in the 20th century. 

\cite{milnor} explains related theory for example and we refer to this implicitly. Related to this, fundamental notions, terminologies and notation on smooth manifolds, smooth maps and Morse functions will be reviewed later: we also except readers have related fundamental knowledge and have experienced related studies.

One of natural problems is characterizations of certain classes of manifolds by the existence of certain Morse(-Bott) functions. 

Reeb's theorem \cite{reeb} characterizes spheres topologically except the $4$-dimensional case: we use Morse functions with exactly two {\it singular} points on closed manifolds. The $4$-dimensional unit sphere is characterized by such functions.
Morse functions are, in short, functions such that each singular point is represented by a quadratic form for suitable local coordinates. Singular points of Morse functions appear discretely and have information on homology groups and some information on homotopy of the manifolds. Related to this, more geometrically, they also have information on so-called {\it handle decompositions} of the manifolds, decompositions into disks, shortly. Morse functions are generalized to Morse-Bott functions and at singular points of them, they are represented as the compositions of projections with Morse functions.

Surprisingly, characterizations of certain classes of manifolds via Morse-Bott functions and classifications of these functions on manifolds of certain classes are of still important and difficult studies. A recent study of Saeki \cite{saeki1}, a study on Morse functions such that preimages of single values containing no singular points are disjoint unions of spheres and tori, mainly motivates us.

Hereafter, we use the notation ${\mathbb{R}}^k$ for the $k$-dimensional Euclidean space and $\mathbb{R}:={\mathbb{R}}^1$. This is also a Riemannian manifold with the standard Euclidean metric and we define $||x|| \geq 0$ as the distance of $x$ and the origin $0$ there. 
Let
$S^k:=\{x \in {\mathbb{R}}^{k+1} \mid ||x||=1\}$ and $D^k:=\{x \in {\mathbb{R}}^{k} \mid ||x|| \leq 1\}$. They are the $k$-dimensional unit sphere and the $k$-dimensional unit disk. {\it Lens spaces} and {\it torus bundles} over the circle $S^1$ are of important $3$-dimensional closed, connected and orientable manifolds. We explain this later again.
\begin{Thm}[{\cite[Theorem 6.5]{saeki1}}]

\label{thm:1}

	A $3$-dimensional closed, connected and orientable manifold $M$ admits a Morse function $f:M \rightarrow \mathbb{R}$ such that preimages of single values containing no singular points are disjoint unions of copies of the sphere $S^2$ and the torus $S^1 \times S^1$ if and only if
$M$ is diffeomorphic to $S^3$, $S^1 \times S^2$, a Lens space, or a manifold represented as a connected sum of these manifolds.
\end{Thm}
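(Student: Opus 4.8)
The plan is to prove the two implications separately: sufficiency by explicitly building the required functions, and necessity by classifying how the regular level sets can change and reading off a connected-sum decomposition from the Reeb graph.

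For the ``if'' direction I would first produce a model function on each prime factor. On $S^3$ the standard height function has two critical points and every regular level set is a copy of $S^2$. On $S^1 \times S^2$ and on each Lens space $L(p,q)$ I would use a genus-one Heegaard splitting: its self-indexing Morse function has exactly one critical point of each index $0,1,2,3$, and as the value increases the connected regular level set passes through $S^2$, then $S^1 \times S^1$ (a handle is attached across the index-$1$ point), then $S^2$ again (the handle is removed across the index-$2$ point), so all regular level sets are spheres or tori. To obtain an arbitrary connected sum I would splice such models: near a maximum of $f_1$ and a minimum of $f_2$ the level sets are small spheres bounding balls, so deleting those balls, identifying the two boundary spheres, and reparametrising produces a single function on $M_1 \# M_2$ whose regular level sets are again disjoint unions of spheres and tori. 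Iterating this handles finitely many summands and repeated factors.

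For the ``only if'' direction, suppose $f$ is such a function and, after a small perturbation, assume its critical values are distinct. The key local fact is that crossing a critical point performs an elementary surgery on the regular level surface, and that the surgery's effect is pinned down by the index together with the Euler characteristic (a $1$-handle lowers $\chi$ by $2$, a $2$-handle raises it by $2$). Imposing that every component stay a sphere $S^2$ or a torus $S^1 \times S^1$ then forces the admissible transitions to be exactly: birth or death of a sphere (indices $0$ and $3$); the merge $S \sqcup S \to S$ or the dual split $S \to S \sqcup S$; the merge $S \sqcup T \to T$ or its dual $T \to S \sqcup T$; and the ``handle'' moves $S \to T$ and $T \to S$, where $S$ and $T$ abbreviate the two surface types. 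In particular two tori can never be joined, nor can a torus split, since either would create a genus-two surface; the only way a torus is ever created or destroyed is from or to a sphere. I would encode this data as the Reeb graph of $f$ with every edge labeled $S$ or $T$.

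It then remains to read the topology off this labeled graph. Each ``handle'' move $S \to T \to S$ builds, from a ball, a solid torus (ball plus a $1$-handle) that is subsequently capped by a $2$- and a $3$-handle; capping a solid-torus boundary by a $2$-handle and a $3$-handle is an upside-down solid torus, so each such segment contributes a union of two solid tori along their common boundary torus, which by the classification of genus-one Heegaard splittings is $S^3$, $S^1 \times S^2$, or a Lens space. The sphere moves $S \sqcup S \leftrightarrow S$ and $S \sqcup T \leftrightarrow T$ take place across $2$-spheres, and one shows these may be taken to be separating, so that cutting along them (and capping with balls) realises $M$ as the connected sum of the pieces produced by the individual handle segments, with segments having no torus event contributing only $S^3$. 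The step I expect to be the main obstacle is precisely this last bookkeeping: one must verify that the relevant spheres are separating and can be chosen pairwise disjoint so the connected-sum splitting is globally consistent, that distinct torus segments do not interfere, and that the attaching curve of each capping $2$-handle really yields a genus-one Heegaard gluing; controlling these gluing data, rather than the Euler-characteristic combinatorics, is where the substantive work lies.
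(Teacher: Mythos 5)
First, a contextual point: the paper does not prove Theorem \ref{thm:1} at all. It is quoted as a known result (\cite[Theorem 6.5]{saeki1}), and the paper only uses it, together with \cite[Lemma 6.6]{saeki1}, as an ingredient in the proof of Theorem \ref{thm:2}. So your proposal can only be compared with the strategy the paper employs for Theorem \ref{thm:2}, and in fact it mirrors that strategy restricted to the Morse case: models on genus-one pieces spliced along sphere levels for sufficiency; for necessity, labelling the Reeb-graph edges by fiber type, observing that torus-labelled edges can neither merge nor split (so they form disjoint arcs), identifying the preimage of a neighborhood of each such arc with a punctured $S^3$, $S^1\times S^2$ or Lens space via a genus-one Heegaard splitting, and reassembling $M$ as a connected sum by cutting along sphere levels and capping. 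That outline is sound.

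There is, however, one genuine gap: the very first step of your necessity argument, ``after a small perturbation, assume its critical values are distinct.'' An arbitrary small perturbation separating the critical values need \emph{not} preserve the sphere-torus condition on regular levels. Concretely, suppose an index-$1$ point (attaching a tube to a torus level $T$) and an index-$2$ point (compressing $T$ along a non-separating circle disjoint from the tube feet) share a critical value; the simultaneous surgery takes $T$ to a torus, consistent with the hypothesis, but separating the values with the index-$1$ point first produces a genus-two intermediate regular level. Hence one must show the critical values can be separated \emph{in some order} compatible with the sphere-torus condition; this is precisely \cite[Lemma 6.6]{saeki1}, which the paper invokes at the corresponding step of its own proof, and it requires an argument, not genericity. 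Two smaller repairs: (i) your claim that the splitting/merging spheres ``may be taken to be separating'' is false in general (when the Reeb graph has a cycle, the level spheres on cycle edges are non-separating, as happens already for a suitable STF function on $S^1\times S^2$), but it is also unnecessary — cutting along a non-separating sphere and capping contributes an extra $S^1\times S^2$ summand, which is in the allowed list, so the Milnor-type cut-and-cap bookkeeping closes without it; (ii) along a maximal torus-labelled arc the events $T \to S \sqcup T$ and $S \sqcup T \to T$ may occur, so the piece is not literally ``ball, $1$-handle, $2$-handle, $3$-handle'' until one notes that each split-off sphere capped by a ball gives a cancelling $2$-/$3$-handle pair (and dually for merges), after which the capped piece is indeed a union of two solid tori. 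With the perturbation step justified as above, your outline becomes a correct proof.
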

The following is our main result, extending Theorem \ref{thm:1} to a certain class of Morse-Bott functions.
\begin{Thm}[Our main result]
\label{thm:2}
	A $3$-dimensional closed, connected and orientable manifold $M$ admits a Morse-Bott function $f:M \rightarrow \mathbb{R}$ such that the following hold if and only if
$M$ is diffeomorphic to $S^3$, $S^1 \times S^2$, a lens space, a torus bundle over the circle $S^1$ or a manifold represented as a connected sum of these manifolds.
\begin{enumerate}

\item \label{thm:2.1} Preimages of single values of $f$ containing no singular points are disjoint unions of copies of the sphere $S^2$ and the torus $S^1 \times S^1$.
\item \label{thm:2.2} The set of all singular points of the Morse-Bott function $f$ is a disjoint union of {\rm (}copies of{\rm )} the following four manifolds.
\begin{enumerate}
	\item \label{thm:2.2.1}
 A one-point set.
	\item \label{thm:2.2.2} The circle $S^1$.
	\item \label{thm:2.2.3} The torus $S^1 \times S^1$ or the sphere $S^2$.
	\item \label{thm:2.2.4} The projective plane ${{\mathbb{R}}P}^2$.
\end{enumerate}
\item \label{thm:2.3} Around each singular point of $f$ where $f$ has no local extremum, it is a Morse function.
\end{enumerate}
\end{Thm}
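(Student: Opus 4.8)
The plan is to prove both implications, handling the reverse (``if'') direction by explicit construction and the forward (``only if'') direction by a Reeb-graph analysis of the level sets, where essentially all the work lies. For the reverse direction, note first that for $S^3$, $S^1 \times S^2$, lens spaces and connected sums of these, a suitable function already exists: a Morse function is a Morse-Bott function all of whose critical submanifolds are one-point sets, so conditions \ref{thm:2.2} and \ref{thm:2.3} hold automatically and \ref{thm:2.1} is precisely the hypothesis of Theorem \ref{thm:1}; thus Theorem \ref{thm:1} supplies these cases. It remains to treat torus bundles and connected sums involving them. For a torus bundle $\pi \colon M \to S^1$ I would compose $\pi$ with a height function $h \colon S^1 \to \mathbb{R}$ having exactly one maximum and one minimum; then $f = h \circ \pi$ is Morse-Bott, its regular level sets are disjoint unions of two tori, and its critical set consists of two copies of $S^1 \times S^1$ occurring as local extrema, as permitted by \ref{thm:2.2.3}. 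For connected sums I would first arrange, by inserting a cancelling dimple in a regular region, that each summand's function carries a point maximum and a point minimum, and then form the connected sum by the usual stacking-and-tubing of the functions; the inserted tube contributes only spherical level sets, so \ref{thm:2.1}--\ref{thm:2.3} are preserved.

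For the forward direction, assume $f$ satisfies \ref{thm:2.1}--\ref{thm:2.3}. The first observation is that \ref{thm:2.3} forces every positive-dimensional critical submanifold to be a local extremum (such a submanifold can never be a non-degenerate isolated critical point), so the only non-extremal critical points are the one-point sets of indices $1$ and $2$. I would then read off the elementary cobordism attached at each critical level from its local model: an index-$0$ or index-$3$ point births or caps a sphere; an index-$1$ or index-$2$ point performs a tube (a $0$-surgery) on the level surface; a critical $S^1$, with its solid-torus neighborhood, births or caps a torus; a critical $S^2$ or $S^1 \times S^1$ at a local extremum splits off or merges a pair of spheres, respectively tori, through a product region; and a critical $\mathbb{R}P^2$, whose orientable tubular neighborhood is the twisted $I$-bundle $\mathbb{R}P^3 \setminus \mathrm{int}\,D^3$, caps a single sphere, that is, performs a connected sum with $\mathbb{R}P^3$. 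Here $M$ being orientable is used to pin down the normal bundles, in particular to force the $\mathbb{R}P^2$-neighborhood to be the twisted $I$-bundle. Crucially, \ref{thm:2.1} restricts the admissible tube moves: since every regular level component has genus $\le 1$, a tube may merge two spheres, merge a sphere into a torus, or add a handle to a single sphere, but may never join two tori nor add a handle to a torus, as either would create a genus-$2$ surface.

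With the elementary pieces classified, I would encode $f$ by its Reeb graph $G$, labelling each edge by the topological type ($S^2$ or $S^1 \times S^1$) of the corresponding family of level components. The sphere-labelled part, capped by the $D^3$'s and twisted $I$-bundles coming from the extremal points and the $\mathbb{R}P^2$'s, contributes $S^3$, $S^1 \times S^2$ and $\mathbb{R}P^3$ summands, and the admissible tube moves show these are attached to the rest by connected sum along separating spheres. Cutting $M$ along all such separating spheres and filling with balls reduces the problem to the torus-labelled part. The key point is that the torus subgraph of $G$ has all vertices of degree $\le 2$: a leaf is a critical-$S^1$ cap by a solid torus, a degree-$2$ vertex is a $T^2$-minimum or $T^2$-maximum fold, and no degree-$3$ torus vertex occurs because \ref{thm:2.1} forbids a tube joining two tori. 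Hence each connected torus subgraph is a path or a cycle. A path, capped at its two ends by solid tori (all intermediate $T^2 \times I$ collars composing to a single gluing), is a genus-one Heegaard manifold, hence a lens space, $S^3$, or $S^1 \times S^2$; a cycle realizes the corresponding summand as a mapping torus of a self-diffeomorphism of $S^1 \times S^1$, that is, a torus bundle over $S^1$.

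I expect the principal obstacle to be the rigorous bookkeeping of this global assembly: proving that the separating spheres genuinely effect a connected-sum decomposition, so that the sphere-part and each torus-part are honest summands, and, within the torus-part, controlling the gluing data precisely enough to recognize a genuine lens space in the path case and a genuine torus bundle in the cycle case. In particular one must verify that the monodromy obtained by traversing a cycle of $G$ once is a well-defined mapping class of $S^1 \times S^1$ and that no $3$-manifold outside the stated list can arise. A secondary but pervasive technical point is the careful enumeration and orientation analysis of the local models at the critical $S^1$, $S^1 \times S^1$ and $\mathbb{R}P^2$ submanifolds, which feeds both the edge-type transitions of $G$ and the identification of the solid-torus and twisted $I$-bundle fillings.
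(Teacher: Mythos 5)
Your proposal is correct in outline and takes essentially the same route as the paper's proof: your forward direction (local models at the four types of critical submanifolds --- $D^3$, solid torus, $\Sigma \times I$, and the twisted $I$-bundle ${\mathbb{R}P}^3 \setminus {\rm Int}\ D^3$ --- followed by a sphere/torus-labelled Reeb-digraph analysis in which the torus part decomposes into paths giving genus-one Heegaard manifolds and cycles giving torus bundles) is the paper's STEP 1, and your converse (explicit functions on each summand, point extrema created so that summands can be tubed together) is the paper's STEP 2. The differences are inessential: the paper normalizes the function near non-extremal vertices into simple STF Morse functions via Saeki's Lemma 6.6 and local moves of its Reeb digraphs where you do direct cobordism bookkeeping, and it reconstructs the converse-direction functions from colored digraphs where you invoke Theorem \ref{thm:1} and compose the bundle projection with a height function on $S^1$.
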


Note that this gives an answer to \cite[Problem 1]{kitazawa4}. 

Related to this, Morse-Bott functions on closed surfaces and their {\it Reeb-digraphs}, which are {\it digraphs} or oriented graphs, are recently studied actively, surprisingly (\cite{gelbukh1, gelbukh2, gelbukh3, gelbukh4, gelbukh5, marzantowiczmichalak, michalak1, michalak2}). The {\it Reeb digraph} of a smooth function is the space of all connected components of preimages of single values and defined as a quotient space of the manifold, its vertex is defined as a point (component) containing singular points of the function, and the orientations (of edges) are defined according to the values of the functions naturally. 
\cite{kitazawa4} is another related study and studies Morse functions in Theorem \ref{thm:2}. We respect \cite{kitazawa4} here. \cite{michalak3} also contains a related $3$-dimensional study.

The next section is for preliminaries. The third section is for the proof and related arguments and remarks. Note also that \cite{kitazawa4} is closely related and that we respect and refer to this in some scenes. However we do not assume essential arguments of the preprint \cite{kitazawa4} unless we need related arguments. We believe that our new present result is sufficiently important to be presented in a single form. We also present arguments of certain new types in our paper. 

\section{Preliminaries.}

Topological manifolds are known to be naturally regarded as so-called CW complexes. Smooth manifolds are, more strongly, regarded as polyhedra. Graphs are $1$-dimensional polyhedra (if they have edges).
For a non-empty space $X$ of such classes, we can define the {\it dimension} $\dim X \geq 0$ as an integer uniquely.

For a manifold $X$, we can define the interior ${\rm Int}\ X$, a manifold of dimension $\dim X$ and the boundary $\partial X:=X-{\rm Int}\ X$, a empty set or a manifold of dimension $\dim X-1 \geq 0$.

Let $c:X \rightarrow Y$ be a smooth map between smooth manifolds $X$ and $Y$. A point $p \in X$ is a singular point of $c$ if the rank of the differential is smaller than both the dimensions $\dim X$ and $\dim Y$ there. A Morse function $c:X \rightarrow \mathbb{R}$ is a 
smooth function such that its singular point is always in the interior ${\rm Int}\ X$ and that each singular point $p$ of the function is represented by the form $c(x_1,\cdots x_m)={\Sigma}_{j=1}^{m-i(p)} {x_j}^2-{\Sigma}_{j=1}^{i(p)} {x_{m-i(p)+j}}^2+c(p)$ for some local coordinates and some integer $0 \leq i(p) \leq m$. Here $i(p)$ is shown to be uniquely defined and we call it the {\it index} of $p$ {\it for $c$}. We do not need such notions here.
Singular points of a Morse function appear discretely. It is a fundamental fact from singularity theory that we can also deform a Morse function via a small homotopy into one such that at distinct singular points of it the values are distinct. We call such a Morse function a {\it simple} Morse function. \cite{golubitskyguillemin} explains singularity theory of differentiable functions and maps systematically from elementary terminologies, notions and arguments, for example.

Hereafter, we call a Morse function such that preimages of single values containing no singular points of it are disjoint unions of copies of $S^2$ and $S^1 \times S^1$ as studied in Theorem \ref{thm:1} a {\it sphere-torus-fibered} Morse function or an {\it STF} Morse function. The name respects \cite{kitazawa4}.

A {\it Morse-Bott} function is a smooth function which is at each singular point represented as the composition of a smooth map with no singular points with a Morse function for suitable local coordinates.

Graphs are polyhedra consisting of 1-cells ({\it edges}) and 0-cells ({\it vertices}). The {\it edge set} of a graph is the set of all edges of it. The {\it vertex set} of the graph is the set of all vertices of it. We can orient edges in an arbitrary way and the graph is an {\it oriented graph} or a {\it digraph}. Two graphs are said to be {\it isomorphic} if there exists a piecewise smooth homeomorphism between them mapping the vertex set of one graph onto that of another graph. This map is called an {\it isomorphism} between these graphs.
Two digraphs are said to be {\it isomorphic} if they are isomorphic as graphs and admit an isomorphism of the graphs mapping each edge of one graph into an edge of the other graph preserving the orientations. The {\it degree} of a vertex $v$ is defined as the number of edges containing the vertex $v$.
 
The {\it Reeb space} $W_c$ of a map $c:X \rightarrow Y$ between topological spaces is defined as the quotient space $X/{\sim}_c$ by the equivalence relation: $x_1 {\sim}_c x_2$ if and only if $x_1$ and $x_2$ belong to a connected component of a same preimage $c^{-1}(y)$. We have the quotient map $q_c:X \rightarrow W_c$ and in a unique way a map $\bar{c}:W_c \rightarrow \mathbb{R}$ with the relation $c=\bar{c} \circ q_c$. 

Let $Y:=\mathbb{R}$. By defining the vertex set of $W_c$ as the set of all connected components containing some singular points of $c:X \rightarrow \mathbb{R}$ with the condition that $\partial X$ is empty, this is a graph in considerable cases. We call the graph the {\it Reeb graph} of $c$. More rigorously, \cite{saeki2, saeki3} discuss this rigorously. Our cases are of such cases unless otherwise stated. Reeb graphs are also classical objects and have been fundamental and strong tools in understanding the manifolds compactly (\cite{reeb}).

The Reeb graph $W_c$ has the structure of a digraph in a canonical way. We orient an edge connecting $v_1$ and $v_2$ as an edge departing from $v_1$ and entering $v_2$ if $\bar{c}(v_1)<\bar{c}(v_2)$. We can easily check that the function $\bar{c}$ is regarded as a piecewise smooth function and injective on each edge. We call this digraph $W_c$ the {\it Reeb digraph} of $c$. If there exists a piecewise smooth function $g:K \rightarrow \mathbb{R}$ on a graph $K$ such that on each edge it is injective, then we can naturally have a digraph $K_g$.

The set of all smooth maps from a smooth manifold $X$ into another smooth manifold $Y$ is topologized with the so-called {\it $C^{\infty}$ Whitney topology}: in short this respects derivatives of the maps. See \cite{golubitskyguillemin} again. We do not need to understand precise and rigorous definitions and arguments on this here.
A {\it diffeomorphism} is a smooth map which is a homeomorphism and which has no singular point. A {\it diffeomorphism on a smooth manifold} $X$ is a diffeomorphism from $X$ onto $X$. The {\it diffeomorphism group} of $X$ is the group of all diffeomorphisms on $X$.

In our paper, a bundle is a smooth bundle, or a bundle whose fiber is a smooth manifold and whose structure group is a subgroup of the diffeomorphism group of the smooth manifold.  

A {\it circle bundle} (resp. {\it torus bundle}) is a smooth bundle whose fiber is the circle $S^1$ (resp. torus $S^1 \times S^1$). We also assume that the diffeomorphisms do not reverse the orientations.

For $3$-dimensional manifold theory, see \cite{hempel}. The class of lens spaces contains circle bundles over $S^2$. These circle bundles are distinguished by the $1$-st integral homology groups, isomorphic to finite cyclic groups. Lens spaces may not be circle bundles. However, the $1$st integral homology group and the fundamental group of each Len space are isomorphic and of such groups. 
Torus bundles over the circle are not diffeomorphic to manifolds represented as connected sums of Lens spaces and copies of $S^1 \times S^2$.

\section{Our main result, Theorem \ref{thm:2}, and related arguments and remarks.}
\begin{proof}[A proof of Theorem \ref{thm:2}]
We refer to several figures from \cite{kitazawa4} where we do not assume essential arguments first presented in \cite{kitazawa4}. 
Some arguments here are presented in \cite{kitazawa4} and several existing studies. \\
\ \\
STEP 1 The manifold $M$ of the domain of the Morse-Bott function $f:M \rightarrow \mathbb{R}$ must be of the presented class of $3$-dimensional manifolds. \\

\ \\
STEP 1-1  The preimage ${q_f}^{-1}(N(v))$ of a small regular neighborhood $N(v)$ of a vertex $v$ where $f$ has a local extremum. \\
We investigate the preimage ${q_f}^{-1}(N(v))$ of a small regular neighborhood $N(v)$ of a vertex $v$ where $f$ has a local extremum. We respect the four cases from (\ref{thm:2.2}). \\
\ \\
CASE 1-1-1 \\
In the first case (\ref{thm:2.2.1}), the preimage is diffeomorphic to the disk $D^3$ and we have a Morse function having exactly one singular point in the interior. It is regarded as a so-called height function of the disk $D^3$. \\
\ \\
CASE 1-1-2 \\
In the second case (\ref{thm:2.2.2}), the preimage is diffeomorphic to the product $S^1 \times D^2$ and we have a Morse-Bott function the set of all singular points of which is seen as the set $S^1 \times \{0\} \subset S^1 \times {\rm Int}\ D^2$. This can be deformed into a simple STF Morse function with exactly two singular points by a suitable small homotopy, thanks to fundamental singularity theory and elementary topological arguments. \\
\ \\
CASE 1-1-3 \\
In the third case (\ref{thm:2.2.3}), the preimage is diffeomorphic to the product $S^1 \times S^1 \times D^1$ or $S^2 \times D^1$ and we have a Morse-Bott function the set of all singular points of which is seen as the set $S^1 \times S^1 \times \{0\} \subset S^1 \times S^1 \times {\rm Int}\ D^1$ or $S^2 \times \{0\} \subset S^2 \times {\rm Int}\ D^1$. The latter case of these two functions can be deformed into a simple STF Morse function with exactly two singular points by a suitable small homotopy, thanks to fundamental singularity theory and elementary topological arguments.

Note that the former function here is closely related to a key ingredient in STEP 1-2 and our new arguments. \\
\ \\
CASE 1-1-4 \\
In the fourth case (\ref{thm:2.2.4}), the preimage is shown to be diffeomorphic to the complementary set of the interior of a smoothly embedded disk $D^3$ in the $3$-dimensional real projective space ${\mathbb{R}P}^3$ by a fundamental argument on $3$-dimensional manifolds. Note that the $3$-dimensional real projective space is a circle bundle over $S^2$, for example. We have a Morse-Bott function the set of all singular points of which is seen as a copy of the real projective plane ${\mathbb{R}P}^2$ embedded in a certain canonical way in the $3$-dimensional real projective space ${\mathbb{R}P}^3$. \\
\ \\
STEP 1-2 Deforming the Morse-Bott function $f$ locally around the preimages ${q_f}^{-1}(N(v))$ of small regular neighborhoods $N(v)$ of vertices $v$ where $f$ has no local extremum. \\ 
 \ \\
By a suitable small homotopy, we can deform the Morse-Bott function around the preimages ${q_f}^{-1}(N(v))$ of small regular neighborhoods $N(v)$ of vertices $v$ where $f$ has no local extremum preserving the function on the complementary set of the interior of the disjoint union of these local preimages ${q_f}^{-1}(N(v))$. The local functions are originally Morse by (\ref{thm:2.3}), and by (\ref{thm:2.1}), STF Morse functions. By \cite[Lemma 6.6]{saeki1}, these local functions are changed into simple STF Morse functions.
Such an argument is also presented in \cite{kitazawa4} and we present again.
 
We also present additional arguments and figures from \cite{kitazawa4}. 
FIGURE \ref{fig:1} shows local forms of simple STF Morse functions by Reeb digraphs with information on preimages. 

Blue (red) colored edges show edges the preimages of single points in the interior of which are diffeomorphic to $S^2$ (resp. $S^1 \times S^1$). Black dots are for vertices. We adopt this rule for our figures. We can consider the case of $-c$ for the function $c$ and we omit the case. We also respect this fundamental rule, hereafter.
\begin{figure}
		\includegraphics[width=80mm,height=27.5mm]{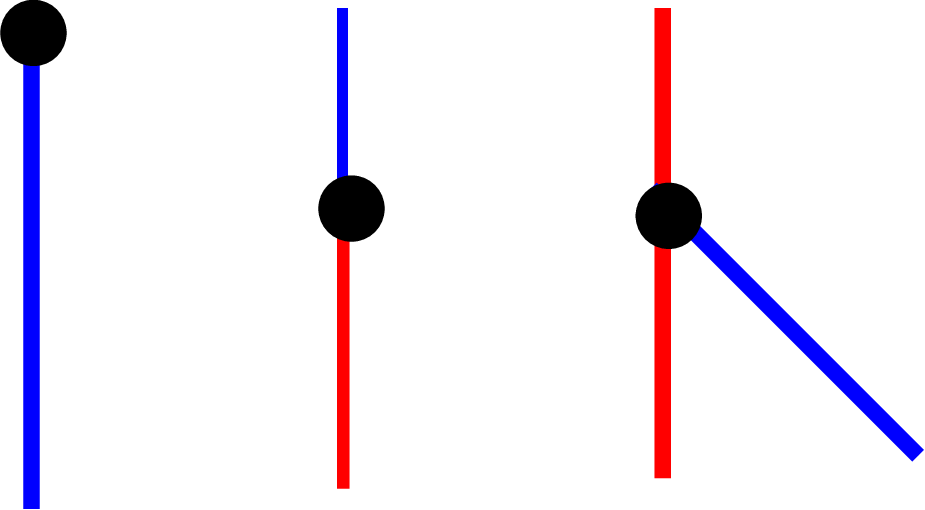}
		\caption{Local information on the Reeb (di)graph and preimages for simple STF Morse functions. Blue (red) colored edges show edges the preimages of single points in the interior of which are diffeomorphic to $S^2$ (resp. $S^1 \times S^1$). Black dots are for vertices. Hereafter we respect this rule for our figures.}
\label{fig:1}
	\end{figure}

FIGURE \ref{fig:2} shows local changes. Each homotopy is realized by a suitable small homotopy. This is from fundamental singularity theory together with the fundamental fact on the 3-dimensional manifold theory: the preimage of the presented local graph (local complex) is diffeomorphic to a manifold obtained by removing the interiors of two disjointly and smoothly embedded copies of the disk $D^3$ in the interior of $S^1 \times D^2$. 
\begin{figure}
		\includegraphics[width=80mm,height=27.5mm]{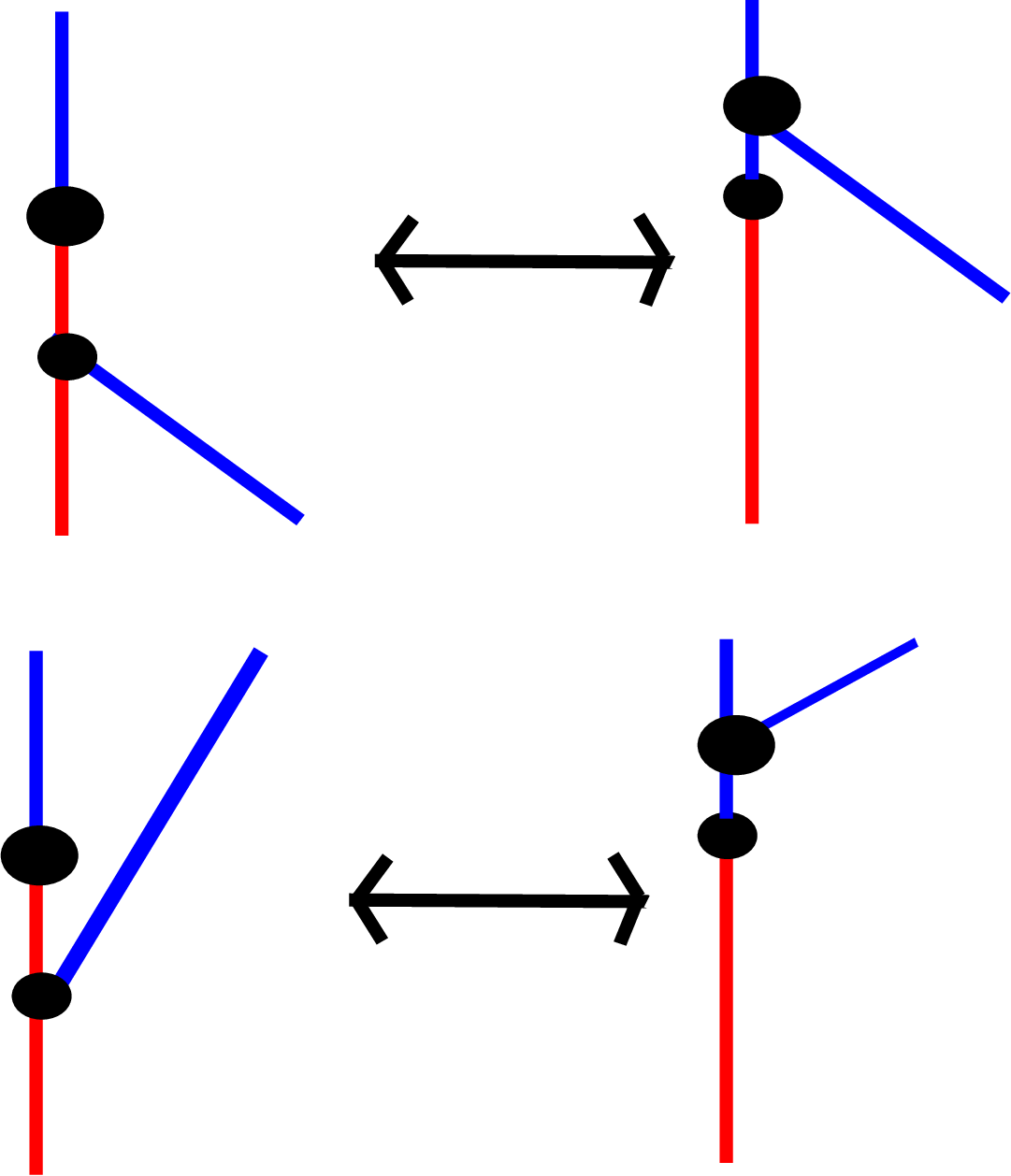}
		\caption{Local changes of (the Reeb digraphs of) simple STF Morse functions.}
\label{fig:2}
	\end{figure}

We can consider a suitable iteration
 of local changes of functions presented in FIGURE \ref{fig:2} to have another new Morse-Bott function $f_0:M \rightarrow \mathbb{R}$ and a new Reeb digraph such that the union of all edges the preimages of single points in the interiors of which are diffeomorphic to $S^1 \times S^1$ consists of connected components represented as either of the following. If we need, then we change the local functions for CASE 1-1-2 into simple STF Morse functions locally.

Note that for Reeb spaces and Reeb graphs, preimages are considered for the map $q_c:W_c \rightarrow Y$ for a given map $c:X \rightarrow Y$. We also remember the map $\bar{c}$ with the relation $c=\bar{c} \circ q_c$.

We note that, hereafter, we present some new arguments, which do not appear in \cite{kitazawa4}. 

\begin{itemize}
\item A closed arc $I$ satisfying the following: vertices of the boundary points, points of $\partial I$, are of degree $2$ and ones where the function $\bar{f_0}:W_{f_0} \rightarrow \mathbb{R}$ does not have a local maximum, and vertices in the interior are of degree $2$ and ones where the function $\bar{f_0}:W_{f_0} \rightarrow \mathbb{R}$ has a local extremum. We can also check that the preimage ${q_f}^{-1}(N(I))$ of a small regular neighborhood $N(I)$ of $I$ is diffeomorphic to a manifold obtained by removing the interiors of two smoothly and disjointly embedded copies of $D^3$ from a Lens space, $S^2 \times S^1$, or $S^3$.
\item A connected component $C$ homeomorphic to the circle $S^1$ satisfying the following: at each vertex $v \in C$ there where the restriction ${\bar{f_0}} {\mid}_{C}$ to $C$ of the function $\bar{f_0}:W_{f_0} \rightarrow \mathbb{R}$ has a local extremum, the original function $\bar{f_0}$ also has a local extremum at the vertex $v$ and $v$ is also of degree $2$ in the graph $W_{f_0}$. We can also check that the preimage ${q_f}^{-1}(N(C))$ of a small regular neighborhood $N(C)$ of $C$ is diffeomorphic to a manifold obtained by removing the interiors of finitely many smoothly and disjointly embedded copies of $D^3$ from a torus bundle over the circle.
\end{itemize}

Remembering STEP 1-1 with CASE 1-1-1--CASE 1-1-4 and Theorem \ref{thm:1} for example, we can see that the closed, connected and orientable manifold $M$ of the domain is diffeomorphic to a manifold obtained in the following way.
\begin{itemize}
\item We prepare a suitable finite family of torus bundles over $S^1$, Lens spaces, copies of $S^2 \times S^1$, and copies of $S^3$. 
\item We remove the interiors of finitely many smoothly and disjointly embedded copies of $D^3$ from the prepared manifolds suitably.
\item We glue the resulting manifolds suitably to have our desired closed, connected and orientable manifold along the boundaries, each of which is diffeomorphic to $S^2$. The resulting manifold is diffeomorphic to $M$.
\end{itemize}
Our manifold $M$ is also of the class of $3$-dimensional closed, connected and orientable manifolds presented in the statement of Theorem \ref{thm:2}. \\

This completes STEP 1. \\
\ \\
STEP 2 Construct a desired Morse-Bott function on a $3$-dimensional manifold $M$ of the presented class, conversely. \\

On a sphere $S^3$, we have a Morse function with exactly two singular points. It is also a specific case of simple STF Morse functions.

We can reconstruct a desired simple STF Morse function on each lens space $M_j$, $S^1 \times S^2$  and $S^3$ from the colored digraph in FIGURE \ref{fig:3}. Rules for colors are for preimages as presented: blue (red) colored edges are for preimages of single points diffeomorphic to $S^2$ (resp. $S^1 \times S^1$). Blue colored vertices are for removal of (the interiors of) small regular neighborhoods of the vertices and the preimages: preimages here are diffeomorphic to $D^3$ (${\rm Int}\ D^3$). 

In addition, we can also reconstruct a desired Morse-Bott function $f_j:M_j \rightarrow \mathbb{R}$ on each torus bundle $M_j$ over the circle $S^1$ from the colored digraph  in FIGURE \ref{fig:4}. Rules for colors and blue vertices are same as those in the previous situation.
\begin{figure}
		\includegraphics[width=80mm,height=27.5mm]{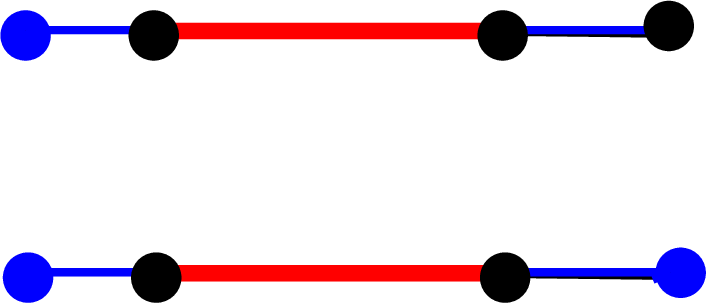}
		\caption{We can reconstruct a desired Morse function on each lens space $M_j$, $S^1 \times S^2$ and $S^3$, from the colored digraph. Rules for colors are for preimages as presented. Blue colored vertices are for removal of (the interiors of) small regular neighborhoods of the vertices and the preimages, each of which is diffeomorphic to $D^3$ (${\rm Int}\ D^3$).}
\label{fig:3}
	\end{figure}
\begin{figure}
		\includegraphics[width=80mm,height=27.5mm]{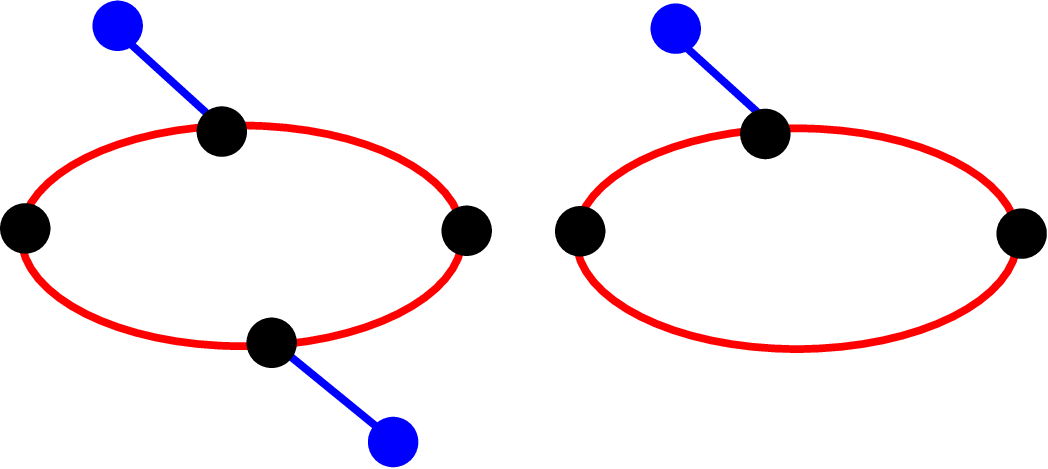}
		\caption{We can reconstruct a desired Morse-Bott function on each torus bundle $M_j$ over the circle $S^1$, from the colored digraph.}
\label{fig:4}
	\end{figure}

On a general $M$, represented as a connected sum of the manifolds $M_j$, by gluing these local Morse-Bott functions suitably, we have a desired Morse-Bott function $f:M \rightarrow \mathbb{R}$. 

For our reconstruction here, the author has discussed more general cases in \cite{kitazawa1, kitazawa2} for example. However we do not understand these studies of course. Our construction in the present proof is very fundamental, natural and specific. 

Note also that we can have a resulting Reeb digraph as a so-called {\it cactus graph}: a {\it cactus graph} is a graph such that each edge is contained in at most one circle ({\it simple cycle}).  
\\
\ \\
This completes the proof.
\end{proof}
\begin{Prob}
\label{prob:1}
	For a piecewise smooth function $g:K \rightarrow \mathbb{R}$ on a finite and connected graph $K$ such that on each edge $e$ it is injective, consider the digraph $K_g$ such that at each vertex $v$ where $g$ has a local extremum the degree of $v$ is $1$ or $2$. For the edge set $E_{K_g}$, a map $l_{K_g}$ is defined satisfying the following: the values are $0$ or $1$ and on edges $e_v$ containing a vertex $v$ where the function $g$ has a local extremum, the values $l(e_v)$ are constant (for each $v$). 
	Does there exist a Morse-Bott function $f:M \rightarrow \mathbb{R}$ on a given $3$-dimensional manifold $M$ satisfying the three conditions in Theorem \ref{thm:2} and the following properties?
	\begin{enumerate}
		\item There exists an isomorphism $\phi:K_g \rightarrow W_f$ between the digraphs.
		\item The preimage ${q_f}^{-1}(p)$ of a point $p$ in the interior of the edge $\phi(e) \subset W_f$ is diffeomorphic to $S^2$ {\rm (}$S^1 \times S^1${\rm )} if $l_{K_g}(e)=0$ {\rm (}resp. $1${\rm )}.
	\end{enumerate} 
\end{Prob}

This respects a problem mainly discussed in \cite{kitazawa4} (\cite[Problem 1 ]{kitazawa4}).
For closed surfaces, \cite{michalak1} is a complete answer for the Morse function case and \cite{gelbukh4, gelbukh5} give some complete answers for Morse-Bott functions on closed surfaces.

We present a kind of explicit affirmative answers characteristic to our situation.
A {\it subgraph} $K^{\prime}$ of a graph $K$ means a subcomplex of $K$. If the graph $K$ is a digraph, then $K^{\prime}$ is also a digraph canonically.
\begin{Thm}
\label{thm:3}
Given a situation for Problem \ref{prob:1} as follows.

Let there exist $l$ disjoint simple cycles $\{C_j\}_{j=1}^l$ in $K$ satisfying the following. We consider the restriction $g {\mid}_{C_j}$ of the function $g$. The following are satisfied.
\begin{itemize}
\item For each edge $e_{C_j}$ of $C_j$, $l_{K_g}(e_{C_J})=1$.
\item At each vertex where $g {\mid}_{C_j}$ has a local extremum, it is also a vertex where the original function $g$ has a local extremum.
\item If $K$ is homeomorphic to the circle $S^1$, then at some vertex $v \in K$, $g$ does not have a local extremum. 
\end{itemize}
Then for any family $\{M_j\}_{j=1}^l$ of $l$ manifolds each of which is a torus bundle over $S^1$ and "some $3$-dimensional manifold $M$ from Theorem \ref{thm:1}", denoted by $M_{l+1}$, and a manifold $M$ represented as a connected sum of these $l+1$ manifolds, we can have a desired Morse-Bott function $f:M \rightarrow \mathbb{R}$.
	
\end{Thm}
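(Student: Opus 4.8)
\textbf{Proof proposal for Theorem \ref{thm:3}.}

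The plan is to realize the prescribed digraph $K_g$ with its edge-labeling $l_{K_g}$ as the Reeb digraph of a Morse-Bott function on $M$ by building the function piece by piece, one local model at a time, and then gluing. The reconstruction in STEP 2 of the proof of Theorem \ref{thm:2} already supplies the two essential building blocks: the colored digraph in FIGURE \ref{fig:3} yields simple STF Morse functions on Lens spaces, on $S^1 \times S^2$, and on $S^3$, while the colored digraph in FIGURE \ref{fig:4} yields a Morse-Bott function on each torus bundle $M_j$ over $S^1$ whose Reeb digraph is a simple cycle all of whose edges carry preimage $S^1 \times S^1$. The hypotheses on the cycles $\{C_j\}_{j=1}^{l}$ are designed precisely so that each $C_j$, with the labeling $l_{K_g}(e_{C_j}) = 1$ on all its edges and with local extrema of $g|_{C_j}$ occurring only at genuine local extrema of $g$, matches the combinatorial pattern of the torus-bundle model of FIGURE \ref{fig:4}.

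First I would decompose $K$ along the simple cycles: remove small open neighborhoods of each $C_j$ and regard what remains as a forest of arcs and a collection of the cyclic pieces $C_j$. To each cycle $C_j$ I would assign the torus-bundle manifold $M_j$ together with the Morse-Bott function from FIGURE \ref{fig:4}, arranging the orientation-giving function values so that $\bar{f}|_{C_j}$ agrees with $g|_{C_j}$ up to an orientation-preserving piecewise-smooth reparametrization on each edge; the condition that local extrema of $g|_{C_j}$ be local extrema of $g$ guarantees that the degree-$2$ vertices of the torus-bundle model attach correctly. To the remaining part of $K$ I would assign the simple STF Morse functions of FIGURE \ref{fig:3} realizing the sphere-labeled edges and the remaining torus-labeled edges on copies of $S^3$, $S^1 \times S^2$, and Lens spaces, so that the aggregate non-cyclic piece produces the manifold $M_{l+1}$ drawn from Theorem \ref{thm:1}. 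The third hypothesis, that $g$ has a non-extremal vertex when $K \cong S^1$, is exactly what is needed to keep $M_{l+1}$ nonempty (equivalently, to place at least one connected-sum node) in the degenerate case where $K$ is itself a single cycle.

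The gluing step is then the same as in STEP 2: the local preimages meet along $2$-spheres $S^2$ sitting over the cut points, and one forms the connected sum of $\{M_j\}_{j=1}^{l}$ and $M_{l+1}$ by matching these sphere boundaries, simultaneously matching the local functions across each sphere so that the glued function remains Morse-Bott and the assembled Reeb digraph is isomorphic to $K_g$ with the correct edge colors. I would verify that the three conditions of Theorem \ref{thm:2} survive this gluing: condition (\ref{thm:2.1}) holds because every non-singular preimage is locally $S^2$ or $S^1 \times S^1$ by construction, (\ref{thm:2.2}) holds because each local model only introduces singular-point sets of the four allowed diffeomorphism types, and (\ref{thm:2.3}) holds because away from extrema the local models are Morse.

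The main obstacle I expect is not the existence of the local models, which are given, but the compatibility of the gluing data along the $2$-spheres: one must ensure that the reparametrizations of $\bar{f}$ on adjacent edges can be chosen so that the function values match across each gluing sphere and so that an honest isomorphism $\phi : K_g \to W_f$ of \emph{digraphs} (not merely of graphs) is produced, respecting all orientations dictated by $g$. In particular, when a cycle $C_j$ shares vertices with the rest of $K$, I would need to check that the orientation of every edge entering or leaving such a shared vertex is consistent between the torus-bundle model and the STF models, and that the heights can be globally rescaled to be simultaneously monotone in the prescribed direction along each edge; this is where the hypothesis that extrema of $g|_{C_j}$ coincide with extrema of $g$ does the real work, and it is the point at which I would concentrate the careful verification.
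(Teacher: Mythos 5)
Your high-level plan (torus-bundle models from FIGURE \ref{fig:4} for the cycles $C_j$, STF models from FIGURE \ref{fig:3} for the rest, then a connected-sum gluing) matches the paper's starting point, but the step you explicitly defer --- the gluing at vertices shared by a cycle $C_j$ and the rest of $K$ --- is exactly where the paper's proof has its essential content, and your proposal as stated would fail there. Connected-sum gluing of local functions must take place along preimages diffeomorphic to $S^2$, i.e. over points of sphere-labelled edges; but every edge of $C_j$ carries $l_{K_g}=1$, so near a shared vertex $v_C$ all nearby fibers on the cycle side are tori, and there is no ``$2$-sphere sitting over the cut point'' to glue along. Worse, the preimage of $v_C$ under the final function must be a single connected singular level joining the torus-bundle piece to the rest of the manifold, and no amount of orientation or reparametrization bookkeeping produces a connected singular level from a disjoint union of local models; this is a geometric problem, not a compatibility-of-heights problem.

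The paper resolves this with a device absent from your proposal: auxiliary sphere-labelled edges plus cancellation of singular points. It takes $K^{\prime}$ maximal among subgraphs containing no cycle edges, and at each shared vertex $v_C$ it attaches, to both $K^{\prime}$ and $C_j$, one new edge entering $v_C$ from a new minimum vertex and one leaving $v_C$ to a new maximum vertex, giving augmented digraphs ${K_0}^{\prime}$ and $C_{0,j}$ with the new edges labelled $0$. It realizes ${K_0}^{\prime}$ by a function ${f_0}^{\prime}$ on a manifold $M^{\prime}$ whose components lie in the Theorem \ref{thm:1} class (this is where the third hypothesis is used: it guarantees every simple cycle of ${K_0}^{\prime}$ has a vertex at which the relevant function is not locally extremal, so no torus-bundle summand is forced inside $M^{\prime}$ --- a more specific role than the one you assign it), and realizes $C_{0,j}$ by a function $f_{0,j}$ on the torus bundle $M_j$. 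It then chooses a small disk ${D^3}_{v_C} \subset M^{\prime}$ containing exactly the two new extremum singular points and mapping onto the two auxiliary edges, and a disk ${D^3}_{v_{C_j}} \subset M_j$ containing no singular points and mapping onto an interval inside the auxiliary edges; removing their interiors and gluing along the boundary spheres joins the two singular levels over $v_C$ along a circle into one connected level, and identifies the auxiliary edges in pairs. Finally, the leftover hanging edges and their extremum vertices are eliminated by canceling pairs of singular points (handles), and only after this cancellation is the Reeb digraph isomorphic to $K_g$. Without this auxiliary-edge-plus-cancellation mechanism, or an equivalent substitute, the gluing step in your proposal cannot be carried out.
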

\begin{proof}
Here, we refer to several published articles \cite{kitazawa1, kitazawa2} and arguments. We also refer to some preprints of the author \cite{kitazawa3, kitazawa4}. 

We can have the following subgraph $K^{\prime}$ of $K$ in the unique way.
\begin{itemize}
\item The subgraph $K^{\prime}$ contains no edge in cycles $C_j$. 
\item The subgraph $K^{\prime}$ is maximal among all subgraphs $K^{\prime \prime}$ satisfying the previous condition.
\end{itemize} 
We do the following operation for each vertex $v_{C}$ of $K^{\prime}$ originally contained in some cycle $C_j$. 

We add two oriented edges $e_{v_{C,K^{\prime}},{\rm l}}$ and $e_{v_{C,K^{\prime}},{\rm h}}$. We also add the first edge $e_{v_{C,K^{\prime}},{\rm l}}$ as an edge departing from a new vertex $v_{C,K^{\prime},{\rm l}}$ and entering $v_{C}$. We also add the second edge $e_{v_{C,K^{\prime}},{\rm h}}$ as an edge departing from the vertex $v_{C}$ and entering a new vertex $v_{C,K^{\prime},{\rm h}}$.

Let ${K_0}^{\prime}$ denote the resulting digraph. We extend the map $l_{K_g}$ as a map such that at each new edge the value is $0$. Let the map denoted by ${l_{K_g,0}}^{\prime}$.

We also do the following operation for each vertex $v_{C}$ of each cycle $C_j$. The notation "$v_{C}$ here" is same as the vertex $v_{C}$ before.

We add another two oriented edges $e_{v_{C},{\rm l}}$ and $e_{v_{C},{\rm h}}$. We also add the first edge $e_{v_{C},{\rm l}}$ as an edge departing from a new vertex $v_{C,{\rm l}}$ and entering $v_{C}$. We also add the second edge $e_{v_{C},{\rm h}}$ as an edge departing from the vertex $v_{C}$ and entering a new vertex $v_{C,{\rm h}}$. Let $C_{0,j}$ denote the resulting digraph, obtained as a result of the change from $C_j$. 

We further extend the maps $l_{K_g}$ and ${l_{K_g,0}}^{\prime}$ as a map such that at each of additional new edges the value is $0$. Let the map denoted by $l_{K_g,0}$.

For ${K_0}^{\prime}$, we can have a Morse-Bott function $f^{\prime}:M^{\prime} \rightarrow \mathbb{R}$ on a suitably chosen manifold $M^{\prime}$ as in the statement of Theorem \ref{thm:2}: for the preimages we respect the map ${l_{K_g,0}}^{\prime}$ (and the map $l_{K_g,0}$). 
We can construct the function in such a way that the set of all singular points of the function is a disjoint union of copies of $S^2$, $S^1$, $S^1 \times S^1$ and one-point sets: we consider CASE 1-1-1, CASE 1-1-2 and CASE 1-1-3 in the proof of Theorem \ref{thm:2}.
For the class of manifolds $M^{\prime}$ belongs to, we need additional consideration on local construction around each vertex $v$ where for the digraph ${K_0}^{\prime}$ both an edge entering $v$ and an edge departing from $v$ exist. We construct locally as in \cite[(3) in STEP 2 in the proof of Theorem 2]{kitazawa4}. Or we also construct as presented in the articles \cite{kitazawa1, kitazawa2}: for example \cite[Proof of Theorem 1.2]{kitazawa1}. In this construction, it is most important to choose the local function so that we can deform the local function by a suitable homotopy into a simple STF Morse function having the preimage of some single value diffeomorphic to $S^2$ and containing no singular point of the simple STF Morse function.

Our construction also yields a Morse-Bott function ${f_0}^{\prime}:M^{\prime} \rightarrow \mathbb{R}$ on $M^{\prime}$ which is, around each point where the function does not have a local extremum, represented as a simple STF Morse function. In addition, the function ${f_0}^{\prime}$ satisfies the following. The union of all edges the preimages of single points in the interiors of which are diffeomorphic to the torus $S^1 \times S^1$ is a disjoint union of intervals: as a small remark, we deform the local function around a vertex for CASE 1-2 in the proof of Theorem \ref{thm:2} into a local simple STF Morse function, if we need. For this, note that for any simple cycle ${C_{{K_0}^{\prime}}}^{\prime}$ in the graph ${K_0}^{\prime}$, at some vertex $v \in {C_{{K_0}^{\prime}}}^{\prime}$, either $g {\mid}_{{C_{{K_0}^{\prime}}}^{\prime}}$ does not have a local extremum or $g$ does not have a local extremum on the original graph $K$. This is thanks to the assumption that if the graph $K$ is homeomorphic to the circle $S^1$, then at some vertex $v \in K$, $g$ does not have a local extremum. Each connected component of the manifold $M^{\prime}$ belongs to the class of "the $3$-dimensional manifolds $M$ of Theorem \ref{thm:1}". Remember also that such an argument is presented in STEP 1-2 in the proof of Theorem \ref{thm:2}. By the structure of the Morse-Bott function and fundamental arguments on Morse functions, we have a copy ${D^3}_{v_{C}}$ of the disk $D^3$ small and smoothly embedded into $M^{\prime}$ and mapped onto the union of the two edges $e_{v_{C,K^{\prime}},{\rm l}}$ and $e_{v_{C,K^{\prime}},{\rm h}}$ for each vertex $v_C$ before. We can also choose the disk ${D^3}_{v_{C}}$ as a space containing exactly two singular points of the function ${f_0}^{\prime}$. For related arguments, for our previous result, see also \cite[A proof of Main Theorem 1]{kitazawa3} for example.

For each simple cycle $C_j$, we remove all vertices $v_{C_j}$ where the function does not have local extrema from $C_j$ and have a new simple cycle ${C_j}^{\prime}$. We can construct a Morse-Bott function on $f_j:M_j \rightarrow \mathbb{R}$ and the Reeb digraph isomorphic to ${C_j}^{\prime}$ and the preimage of a point in $C^{\prime}$ is diffeomorphic to the torus $S^1 \times S^1$: the point may be a point in the interior of an edge or a vertex. We can also reconstruct a Morse-Bott function $f_{0,j}:M_j \rightarrow \mathbb{R}$ which is, around each vertex where the function does not have a local extremum, represented as an STF Morse function not being simple, whose Reeb digraph is isomorphic to $C_{0,j}$ and which respects the map $l_{K_g,0}$ for the preimages. We can reconstruct the functions so that $f_j$ is deformed into $f_{0,j}$ by a suitable homotopy, thanks to fundamental arguments on Morse functions, singular points of them and (generating and canceling) handles. We also have a copy ${D^3}_{v_{C_j}}$ of the disk $D^3$ small and smoothly embedded into $M_j$ and mapped onto a closed interval embedded in the interior of the union of two adjacent edges $e_{v_{C_j},{\rm l}}$ and $e_{v_{C_j},{\rm h}}$ for each vertex $v_{C_j}$ before. Remember that for these edges, the notation of the form "$e_{v_{C},{\rm l}}$, $e_{v_{C},{\rm h}}$, and $v_{C}$" is used before. We have no problem on the usage.
We can also choose the disk ${D^3}_{v_{C_j}}$ as a space containing no singular points of the function $f_{0,j}$. For related arguments, for our previous result, see also \cite[A proof of Main Theorem 1]{kitazawa3} again, for example.

Respecting these arguments and information, we construct a new $3$-dimensional manifold $M$.
First we remove the interiors of disks  ${D^3}_{v_{C}}$ and ${D^3}_{v_{C_j}}$ from $M_j$ and $M^{\prime}$. We glue the resulting manifolds along connected components of the boundaries one after another suitably to have a closed, connected and orientable manifold $M$. More explicitly, we glue connected components originally corresponding to a same vertex $v_{C}=v_{C_j} \in K$ where abusing the notation in such a way has no problem. We also glue the functions $f^{\prime}$ and $f_{0,j}$ (, restricted to the remaining manifolds).

We can have a new Morse-Bott function $f^{\prime}:M \rightarrow \mathbb{R}$ on the new manifold $M$ whose Reeb digraph is isomorphic to a digraph obtained by identifying $e_{v_{C,K^{\prime}},{\rm l}}$ with $e_{v_{C},{\rm l}}$, and $e_{v_{C,K^{\prime}},{\rm h}}$ with $e_{v_{C},{\rm h}}$ from ${K_0}^{\prime}$ and $C_{0,j}$, canonically. By the structure of the Morse-Bott function and fundamental arguments on Morse functions, especially on singular points of the functions, handles and so-called canceling pairs of singular points of the functions or handles, we have a desired function $f:M \rightarrow \mathbb{R}$. Last, we can also see that $M$ is represented as in the statement.

This completes the proof.

\end{proof}
\begin{Ex}

From the two graphs in FIGURE \ref{fig:5}, seen naturally as digraphs, we can reconstruct Morse-Bott functions as in Theorems \ref{thm:2} and \ref{thm:3}.
\begin{figure}
		\includegraphics[width=80mm,height=45mm]{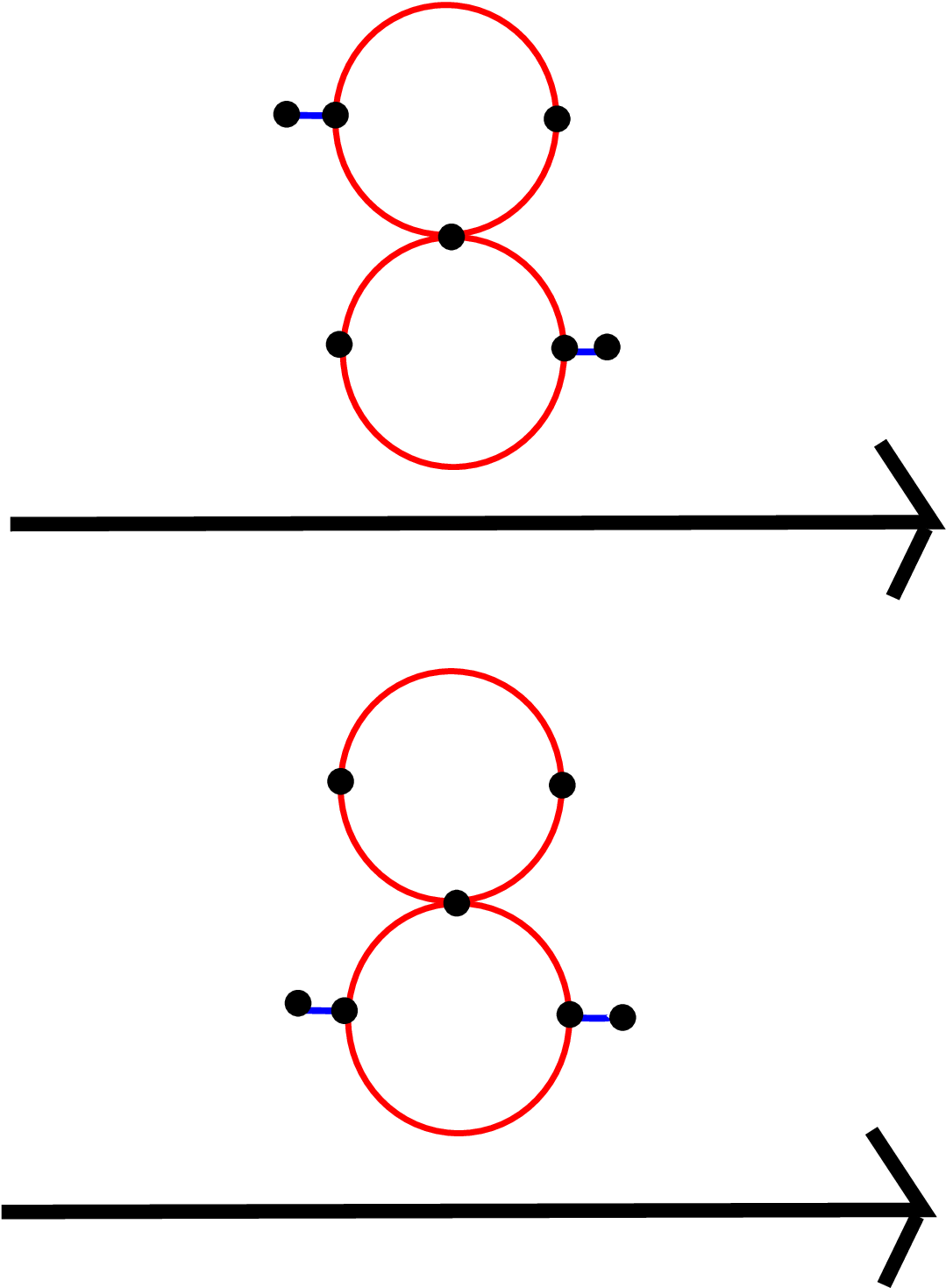}
		\caption{Two similar digraphs for reconstruction in Theorems \ref{thm:2} and \ref{thm:3}.
 For example, rules for colored edges and vertices are as presented before. Long arrows are for orientations of the digraphs. The former (latter) digraph is for $l=0$ (resp. $l=0, 1$) in Theorem \ref{thm:3}.}
\label{fig:5}
	\end{figure}
As before, a blue colored edge $e$ implies $l_K(e)=0$ and a red colored edge $e$ implies $l_K(e)=1$.
 It is important that from the first case, we must have a manifold $M$ as the domain as a "$3$-dimensional manifold $M$ in Theorem \ref{thm:1}" and that from the second case, we can have a case of $l=1$ in Theorem \ref{thm:3}.
\end{Ex}

\section{Acknowledgement.}
The author would like to thank members of the research group hosted by Osamu Saeki. The author would also like to thank people organizing and supporting Saga Souhatsu Mathematical Seminar (http://inasa.ms.saga-u.ac.jp/Japanese/saga-souhatsu.html): the author would like to thank Inasa Nakamura again for inviting the author as a speaker and letting him present \cite{kitazawa1, kitazawa3}. Discussions on several Morse functions and topological properties of the manifolds of their domains with these groups have motivated the author to challenge \cite{kitazawa4} and further and led the author to the present result. 
\section{Conflict of interest and Data availability.}
\noindent {\bf Conflict of interest.} \\
The author works at Institute of Mathematics for Industry (https://www.jgmi.kyushu-u.ac.jp/en/about/young-mentors/). This project is closely related to our study. Our study thanks them for their encouragements. The author is also a researcher at Osaka Central
Advanced Mathematical Institute (OCAMI researcher): this is supported by MEXT Promotion of Distinctive Joint Research Center Program JPMXP0723833165. He is not employed there. However, our study also thanks them for such an opportunity.
Saga Souhatsu Mathematical Seminar (http://inasa.ms.saga-u.ac.jp/Japanese/saga-souhatsu.html), inviting the author as a speaker, is funded and supported by JST Fusion Oriented REsearch for disruptive Science and Technology JPMJFR202U: the author was a speaker on 2024/7/12 supported by this project.\\
\ \\
{\bf Data availability.} \\
Essentially, data supporting our present study are all here. Note that this respects \cite{kitazawa2} and \cite{kitazawa2} and the present paper both study similar studies on Morse-Bott functions on similar classes of $3$-dimensional closed , connected and orientable manifolds. However, the present paper studies some new problems different from ones in \cite{kitazawa2} and present several new arguments. 

\end{document}